\theoremstyle{plain}
    \newtheorem{thm}{Theorem}[section]
    \newtheorem{corollary}[thm]{Corollary}
    \newtheorem{lemma}[thm]{Lemma}
    \newtheorem{theorem}[thm]{Theorem}
\theoremstyle{definition}
    \newtheorem{definition}[thm]{Definition}
    \newtheorem{remark}[thm]{Remark}
\theoremstyle{remark}
    \newtheorem{setup}[thm]{}
\newcommand{\C}{\mathbb{C}}
\newcommand{\BCC}{\mathbb{C}}
\newcommand{\Q}{\mathbb{Q}}
\newcommand{\BQQ}{\mathbb{Q}}
\newcommand{\R}{\mathbb{R}}
\newcommand{\BRR}{\mathbb{R}}
\newcommand{\Z}{\mathbb{Z}}
\newcommand{\BZZ}{\mathbb{Z}}
\newcommand{\SO}{\mathcal{O}}
\newcommand{\BK}{\overline{\rm BK}}
\newcommand{\Mov}{\overline{\rm Mov}}
\newcommand{\N}{\operatorname{N}}
\newcommand{\NS}{\operatorname{NS}}
\newcommand{\PE}{\operatorname{PE}}
\newcommand{\Supp}{\operatorname{Supp}}
\newcommand{\Pic}{\operatorname{\operatorname{Pic}}}
\newcommand{\dom}{\mathrm{dom}}
\newcommand{\Bs}{\mathrm{Bs}}
\begin{document}

\title[Zariski F-decomposition and Lagrangian fibration]{
Zariski F-decomposition and Lagrangian fibration on hyperk\"ahler manifolds
}

\author{Daisuke Matsushita}
\address
{
\textsc{Division of Mathematics} \endgraf
\textsc{Graduate School of Science, Hokkaido University, Sapporo, 060-0810, Japan}}
\email{matusita@math.sci.hokudai.ac.jp}

\author{De-Qi Zhang}
\address
{
\textsc{Department of Mathematics} \endgraf
\textsc{National University of Singapore, 10 Lower Kent Ridge Road,
Singapore 119076}}
\email{matzdq@nus.edu.sg}

\begin{abstract}
For a compact hyperk\"ahler manifold $X$,
we show certain Zariski decomposition
for every pseudo-effective $\BRR$-divisor, and give a sufficient
condition for $X$ to be bimeromorphic to a
(holomorphic)
Lagrangian fibration.
We also prove that any sequence of D-flops between projective hyperk\"ahler
manifolds terminates after finitely many steps.
\end{abstract}

\subjclass[2000]{14J40, 32Q15, 14E30, 53C26}
\keywords{hyperk\"ahler manifold, Zariski decomposition, Lagrangian fibration, termination of flops}

\maketitle

\section{Introduction}

It is a classical result that a pseudo-effective divisor on a compact complex surface has
a Zariski decomposition.
When $X$ is a compact complex manifold of dimension $> 2$
and $D$ a pseudo-effective $\R$-divisor on $X$,
we may consider two types of Zariski-decompositions: either in the sense of
Fujita or in the sense of Cutkosky-Kawamata-Moriwaki as defined before Theorem \ref{ThA}.
In general, such decompositions may not exist.
On the other hand, there are also positive results for big divisors
or for toric varieties;
see \cite[Remark 7-3-6, Th.~7-3-7]{KMM} and \cite[Ch II, Remark 1.17]{ZDA}.

In the first part of this part, we show the existence of such decompositions
on a projective hyperk\"ahler manifold (cf.~the definition in~\ref{setup1}).

Below is our first main theorem, a special case of the more elaborated Theorem \ref{ThA}.

\begin{theorem}\label{ThA'}
Let $X$ be a projective hyperk\"ahler manifold and $D$ a pseudo-effective $\R$-divisor on $X$.
Then there are a birational map $\sigma_1 : X_1 \dasharrow X$ from a projective hyperk\"ahler manifold
$X_1$ and a birational morphism $\sigma_2 : X_2 \to X$ from a projective manifold $X_2$
such that, for each $k \in \{1, 2\}$, $D_k := \sigma_k^*D$ has a
Zariski-Fujita decomposition
$$D_k = P_k + N_k$$
in the sense of Fujita \cite{Fuj}. Namely, we have:
\begin{itemize}
\item[(i)] the divisor $P_k$ is nef, i.e., $P_k \in \bar{K}(X)$, the closure of the
K\"ahler cone $K(X)$; and
\item[(ii)] the divisor $N_k$ is effective; $F \ge \tau^*N_k$ holds whenever
there are a birational morphism
$\tau : X' \to X_k$ and divisor $F \ge 0$ with $\tau^*D_k - F$ nef.
\end{itemize}
\end{theorem}

We remark that the decomposition $D_k = P_k + N_k$ ($k = 1, 2$) in
Theorem \ref{ThA'} is also in the sense of Cutkosky-Kawamata-Moriwaki;
see \cite[Def. 7-3-2, 7-3-5]{KMM}
or the paragraph before Theorem \ref{ThA} for the definition.

\par \vskip 1pc
Our next main theorem is used in the implication ``Theorem \ref{ThA} $\Rightarrow$ Theorem \ref{ThA'}'',
and is a consequence of
Theorem \ref{main}: the termination of flops between projective hyperk\"ahler manifolds.

\begin{theorem}\label{ThB}
Let $X$ be a projective hyperk\"ahler manifold,
and $P$ an effective $\BRR$-divisor in the closed movable cone $\Mov(X)$ {\rm (cf.~the definition in~\ref{setup1})}.
Then there is a birational map $\tau : X' \dasharrow X$
from a projective hyperk\"ahler manifold $X'$ such that $\tau^*P$ is nef.
\end{theorem}

\par \vskip 1pc
In the second part of this paper, we give sufficient conditions
for the existence of Lagrangian fibration on a compact hyperk\"ahler manifold.
Theorems \ref{ThD} and \ref{ThC} are the main results of this part.

Let $X$ be a compact hyperk\"ahler manifold with the Beauville-Bogomolov form $q(*)$
(cf.~\ref{setup1}).
For a nef line bundle $0 \ne L$ on $X$ with $q(L) = 0$,
the {\it Strominger-Yau-Zaslow hyperk\"ahler
conjecture} claims that, in $\Pic(X) \otimes_{\BZZ} \BQQ$, this $L$
is the pullback of a divisor by
a so called (holomorphic) {\it Lagrangian fibration}.
Namely,
a general fibre $F$ of the fibration has $\dim F = \dim X / 2$, and
the $2$-form $\sigma \in H^0(X, \Omega_X^2)
= \BCC \sigma$ restricts to a trivial $2$-form on $F$. The
$F$ is known to be a complex torus by the Liouville-Arnold theorem.

Some partial solutions to the above conjecture have been
obtained by Fu, Hassett-Tschinkel and Sawon
when $X$ is a Hilbert scheme of certain $K3$ surface (cf.~\cite{Saw} for the references therein),
and by Campana-Oguiso-Peternell \cite{COP} for certain non-algebraic $X$.
In \cite{Mat99}, one of the authors proves the above conjecture for non-big,
semi ample $L$ ($\ne 0$) on projective $X$. Late, he extends the result to cover the case with
nef dimension $n(L) \in \{1, \dots, \dim X - 1\}$.
Here $n(L)$ satisfies
$$\kappa(X, L) \le \nu(L) \le n(L)$$
where $\kappa(X, L)$ is the Iitaka $D$-dimension and $\nu(L)$ is
the numerical $D$-dimension. $n(L)$ is defined in \cite{8aut} as $\dim Y$ for a
dominant rational map $f: X \dasharrow Y$ with connected fibres, satisfying:
(i) $f$ is {\it almost holomorphic}, i.e., some fibres of the restriction $f|\dom(f)$
are compact, (ii)  $L$ is numerically trivial on all compact fibres $F$ of $f$ with
$\dim F = \dim X - \dim Y$, and (iii) for every general point $x \in X$
and every curve $C$ passing through $x$ with $\dim f(C) > 0$, one has $L . C > 0$.

The results below are related to the above conjecture.


\begin{theorem}\label{ThD}
Let $X$ be a projective hyperk\"ahler manifold.
Then the following are equivalent.

\begin{itemize}
\item[(1)]
$X$ is bimeromorphic to a projective hyperk\"ahler manifold $X'$ with a $($holomorphic$)$ Lagrangian fibration.
\item[(2)]
$X$ is bimeromorphic to a projective hyperk\"ahler manifold $X'$ admiting a nef,
non-big $\BQQ$-divisor $L'$ with $\kappa(X', L') \ge \dim X / 2$.
\item[(3)]
There is a $\BQQ$-divisor $L$ with $\dim X/2 \le \kappa(X, L) < \dim X$.
\item[(4)]
There is a dominant meromorphic map $g : X \dasharrow B'$ with general fibre of non-general type
and $\dim X/2 \le \dim B' < \dim X$.
\end{itemize}
\end{theorem}

\begin{remark}
{\rm
Even we replace the condition ``$X$ being a projective hyperk\"ahler manifold" in Theorem \ref{ThD} by
a weaker condition ``$X$ being a compact hyperk\"ahler manifold", the same proof works,
except the implication ``$(3) \Rightarrow (1)$" for which we need the compact K\"ahler version of
Theorem \ref{ThB}.
}
\end{remark}

\par \vskip 1pc
The hypothesis in Theorem \ref{ThC} below is weaker than
the termination and abundance conjectures for log pairs
which seem to be harder than the existence of minimal models
(cf.~\cite[Th.~1.2]{BCHM}).
Once we know Theorem \ref{ThB}, the result below (especially the
part (3) $\Rightarrow$ (1)) then follows
as remarked immediately after \cite[Lemma 3.5]{AC}; see also \cite[Th.~3.6]{AC}.

\begin{theorem}\label{ThC}
Let $X$ be a projective hyperk\"ahler manifold.
Assume either $\dim X = 4$, or $($for `$(3) \Rightarrow (1)$'$)$
the existence of
good minimal models
for varieties of
Kodaira dimension zero {\rm (cf.~\cite[Def.~3.50]{KM}, \cite[p.4]{Kaw})}.
Then the following are equivalent.

\begin{itemize}
\item[(1)]
$X$ is birational to a projective hyperk\"ahler manifold $X'$ with a $($holomorphic$)$ Lagrangian fibration.
\item[(2)]
$X$ is birational to a projective hyperk\"ahler manifold $X'$ admiting a nef,
non-big $\BQQ$-divisor $L'$ with the Iitaka $D$-dimension $\kappa(X', L') \ge 1$.
\item[(3)]
There is a $\BQQ$-divisor $L$ with $1 \le \kappa(X, L) < \dim X$.
\item[(4)]
There is a dominant rational map $g : X \dasharrow B'$ with general fibre of non-general type
and $1 \le \dim B' < \dim X$.
\end{itemize}
\end{theorem}

\noindent
{\bf Acknowledgement.}
The authors would like to thank
O.~Fujino, M.~Kawakita, N. Nakayama and H.~Takagi for many suggestions,
and the referee for the suggestions to improve the paper, correcting an error
and encouraging us to generalize Theorem \ref{main} from the dlt case to the current lc case.
The first author is partially supported by Grand-in-Aid \# 18684001,
Japan Society for Promortion of Sciences. The second author
is partially supported by an ARF of NUS.

\section{Preliminary results}

\begin{setup}\label{setup1}
{\rm
{\bf Conventions and terminology}

We use the conventions in \cite{KM}, \cite{GHJ} and Hartshorne's book.

Let $X$ be a compact complex K\"ahler manifold.
For an $\BRR$-divisor $\Delta$ on $X$, we set
$H^0(X, \Delta) := H^0(X, \lfloor \Delta \rfloor)$,
with $\lfloor \Delta \rfloor$ the integral part (or round down) of $\Delta$.

Set $H^{1,1}(X, \BRR) := H^{1,1}(X, \C) \cap H^2(X, \R)$.
The (closed) {\it nef cone} $\bar{K}(X)$ in $H^{1,1}(X, \BRR)$ is
the closure of the {\it K\"ahler cone} $K(X)$ of $X$.
An element in $\bar{K}(X)$ is called a {\it nef class}.
Let $\NS(X)$ be the {\it N\'eron-Severi group}.
Set $\NS_{\Q}(X) := \NS(X) \otimes_{\Z} \Q$ and
$\N^1(X) := \NS(X) \otimes_{\BZZ} \BRR$.
We have $\N^1(X) \subseteq H^{1,1}(X, \R)$.

The (closed) {\it pseudo-effective divisor cone} $\PE(X)$ in $\N^1(X)$
is the closure of effective $\BRR$-divisor classes on $X$.
The {\it closed movable cone} $\Mov(X)$ in
$\N^1(X)$
is generated by the classes of fixed-component free divisors.
We have $\Mov(X) \subseteq \PE(X)$.

We now briefly recall some definitions related to hyperk\"ahler manifolds,
and refer to \cite[pages 171, 176, 182-184, 223-224]{GHJ}
for details. The compact complex K\"ahler manifold $X$ is
called {\it hyperk\"ahler} (or irreducible holomorphic symplectic)
if it is simply connected such that
$H^0(X, \Omega_X^2)$ is spanned by an everywhere non-degenerate two-form $\sigma$.
It follows then $\dim X = 2n$ for some integer $n \ge 1$.
We normalize $\sigma$ so that
$\int_X (\sigma \bar{\sigma})^n = 1$.

There exists a primitive integral quadratic form $q_X(*)$ on $H^2(X, \Z)$, the
{\it Beauville-Bogomolov form}.
Indeed, there is a positive constant $a$ such that
$$q(L) = a \int_X \, L^2(\sigma \bar{\sigma})^{n-1}, \hskip 1pc L \in H^{1,1}(X, \C) .$$
This $q_X(*)$ is non-degenerate of signature $(3, b_2(X) - 3)$.
There is a so called Beauville-Fujiki number $c > 0$ such that
$$q(L)^n =  c L^{2n}, \hskip 1pc L \in H^2(X, \BCC) .$$
The {\it positive cone} $C(X)$ in $H^{1,1}(X, \BRR)$
is the connected component of the open cone
$\{\alpha \in H^{1,1}(X, R) \, | \, q(\alpha) > 0 \}$ that contains a K\"ahler class of $X$.
The closure of $C(X)$ in $H^{1,1}(X, \R)$ is denoted by
$\bar{C}(X)$.
The {\it birational K\"ahler cone} ${\rm BK(X)}$ in $H^{1,1}(X, \R)$ is
$${\rm BK}(X) = \cup_{f: X \dasharrow X'} \, f^*K(X')$$
where $f : X \dasharrow X'$ runs through all bimeromorphic maps $X \dasharrow X'$
from $X$ to another compact hyperk\"ahler manifold $X'$.
The closure of ${\rm BK}(X)$ in $H^{1,1}(X, \R)$ is denoted by
$\BK(X)$. It is known that ${\rm BK}(X) \subseteq C(X)$
and hence $\BK(X) \subseteq \bar{C}(X)$.

For our compact hyperk\"ahler $X$, it is known that
$\Mov(X) = \BK(X) \cap \N^1(X)$;
see also \cite[Ch III, Def.~1.13 and 3.9]{ZDA} and Remark \ref{rThA} (3) below.
Further, $\BK(X)$ coincides with
Boucksom's {\it modified nef cone} ${\rm MN}(X)$ (cf.~\cite[Prop.~4.4 and Lemma 4.9]{Bou} and
\cite[Prop.~28.7]{GHJ}).
}
\end{setup}

\begin{lemma}\label{key}
Let $X$ be a compact hyperk\"ahler manifold with $q(*)$ the primitive Beauville-Bogomolov quadratic form
$($and $q(*, *)$ its
bilinear form$)$ on $H^2(X, \BZZ)$.
Then we have:
\begin{itemize}
\item[(1)]
The birational K\"ahler cone $\BK(X)$ is the intersection of
$\bar{C}(X)$ and the dual of the pseudo-effective divisor $($closed$)$
cone $\PE(X) \subset H^{1,1}(X, \BRR)$
with respect to $q(*, *)$.
If $D \in \PE(X)$ and $q(D, E) \ge 0$ for every prime divisor $E$,
then $D \in \BK(X)$.
\item[(2)]
If $D_1$, $D_2$ are distinct prime divisors, then
$q(D_1, D_2) \ge 0$. Similarly, $q(D_1, D_3) \ge 0$ when $D_3|D_1$ is a pseudo-effective divisor on $D_1$.
\item[(3)]
Suppose that $E_i$ are prime divisors with negative definite matrix $(q(E_i, E_j))_{i,j}$.
Then $E_i$ are linearly independent in the Neron-Severi group $\NS_{\BQQ}(X)$,
and the Iitaka $D$-dimension $\kappa(X, \, \sum E_i) = 0$.
If $D \in \PE(X)$ and $E = \sum e_i E_i$ such that $q(D - E, E_j) \le 0$ for all $j$,
then $D - E \in \PE(X)$.
\item[(4)]
$L \in H^{1,1}(X, \BRR)$ belongs to $C(X)$ if and only if $q(L) > 0$ and $q(L, \omega) > 0$
for some K\"ahler class $\omega$.
\item[(5)]
If an effective $\BQQ$-divisor $L \in \NS_{\BQQ}(X)$ satisfies $q(L) > 0$,
then $X$ is projective, and $L$ is big, i.e., $L = A + E$
for an ample $\BQQ$-divisor $A$ and an effective $\BQQ$-divisor $E$;
further, $|sL| = |M| + F$ for some integer $s \ge 1$,
with $M$ big, $|M|$ the movable part, and
$F$ the fixed part.
\item[(6)]
If $\sigma: X' \dasharrow X$ is a bimeromorphic map from a compact hyperk\"ahler
manifold $X'$,
then it is isomorphic in codimenion one. Hence
$\sigma^*$ is well defined on $H^2(X, \BCC)$ and compatible with its Hodge structure,
the Beauville Bogomolov quadratic form $q(*)$ and the birational K\"ahler cone.
\item[(7)]
If $0 \ne L \in \bar{C}(X)$ and $D \in \PE(X)$ satisfy
$q(L, D) = 0$, then either $D$ and $L$ are parallel in $H^{1,1}(X, \BRR)$
and $q(D) = 0$, or $q(D) < 0$.
\item[(8)]
If $L_1 \equiv L_2$ $($numerical equivalence$)$ for two $\BQQ$-divisors $L_i$,
then $L_1 \sim_{\BQQ} L_2$.
\end{itemize}
\end{lemma}

\begin{proof}
For (1), see \cite[Prop.~28.7]{GHJ}. Note that $q(D) \ge 0$ in the second part.

For (2),
since $\sigma \bar{\sigma} | D_1 \cap D_2$ is weakly positive, one has
$$q(D_1, D_2) = a \int_{D_1 \cap D_2} (\sigma \bar{\sigma} | D_1 \cap D_2)^{n-1} \ge 0 .$$

For the first part of (3), suppose $E' := \sum a_i E_i \equiv \sum b_j E_j =: E''$ (numerical equivalence)
for some $a_i \ge 0$, $b_j \ge 0$ and $E_i \ne E_j$.
Then $0 \ge q(E') = q(E', E'') \ge 0$ by (2), and hence $E' = 0 = E''$
by the negative-definite assumption.
For the second part, write $D = \lim_{t \to \infty} D(t)$ and $D(t) = \sum d(t)_i E_i + D(t)'$
with $d(t)_i \ge 0$, where $D(t)'$ is an effective divisor and contains no any $E_i$.
Since $D(t)$ has a limit and intersecting with a power of a K\"ahler class,
we see that $d(t)_i$ are bounded and we let $\lim_{t \to \infty} d(t)_i = d_i \ge 0$.
Then
$$0 \ge q(D-E, E_j) = \lim_{t \to \infty} q(D(t) - E, E_j) \ge
\lim_{t \to \infty} q(\sum (d(t)_i - e_i) E_i, E_j) = q(\sum (d_i - e_i) E_i, E_j).$$
Hence $\sum (d_i - e_i)E_i \ge 0$ by Zariski's lemma as in \cite[Lemma 3.2]{Miy}.
Thus
$$D - E = \lim_{t \to \infty} (D(t) - E) \, = \,
\sum (d_i - e_i) E_i + \lim_{t \to \infty} D(t)' \, \in \, \PE(X) .$$

(4) is due to the very definition of the positive cone $C(X)$
(containing the K\"ahler cone $K(X)$).

The first part of (5) follows from \cite[Prop.~26.13]{GHJ}
and the claim in its proof,
while the second part follows from the first part.

(6) is proved in \cite[Prop.~21.6 and 25.14]{GHJ} (cf.~(1)).

For (7), see \cite[Ch IV, (7.2)]{BHPV}) and note: $q(*)$ has signature
$(1, b_2(X) - 3)$ on $H^{1,1}(X, \BRR)$.

(8) is true because the irregularity $h^1(X, \SO_X) = 0$.
\end{proof}

\vskip 1pc
To prove Theorem \ref{ThC}, we need the following results in \cite{AC},
with an alternative argument (and notation there) for [ibid., Lemma 2.4]:
the local sections of $f_*\SO(f^*H+D)$ at the point beneath the fibre $F \subset Y$
can be extended globally after replacing the ample $H$ by its multiple.

\begin{lemma} \label{AC} $(cf.$~\cite[Th.~2.3, the proof of Lemma 3.5]{AC}$)$
Let $X$ be a compact K\"ahler manifold with $K_X \sim 0$ and
$g : X \dasharrow B$ a dominant meromorphic map with $B$ projective
and $\dim B \in \{1, \dots, \dim X - 1\}$.
Let $\pi : Y \to X$ be a blowup such that the composition $f = g \circ \pi : Y \to B$
is holomorphic. Then for an ample divisor $H$ on $B$, the divisor
$L := \pi_*f^*H$ and a general fibre $F$ of $f$, we have:
\begin{itemize}
\item[(1)]
$\kappa(X, L) = \dim B + \kappa(F)$, where $\kappa(F)$ is the Kodaira dimension of $F$.
\item[(2)]
Suppose that $X$ is projective and $\kappa(F) = 0$.
Suppose further either $\dim X = 4$, or $F$ has a good minimal model
in the sense of Kawamata \cite{Kaw}. If $L$ is nef, then
$g$ is almost holomorphic $($so the nef dimension $n(L) < \dim X)$.
Namely, some fibres of the restriction $g | \dom(g)$
are compact.
\end{itemize}
\end{lemma}

\section{Proof of Theorems and their consequences}

In this section, we prove the results in the introduction as well as
Theorem \ref{ThA} below. Theorem \ref{ThA} implies Theorem \ref{ThA'},
thanks to Theorem \ref{ThB}.

The part (I) below has been given an analytic proof by
Boucksom \cite[\S 4]{Bou}.
The result in \cite{Bou} is broader since Boucksom decomposes every
pseudo-effective classes in $H^{1,1}(X, \BRR)$ (which may not be a divisor class) as
the sum of a modified nef class and the class of an effective exceptional divisor.
The purpose of including the Part (I) here is to
stress that it has also an algebraic constructive proof; to be precise,
we will see that it follows also from the original Zariski's lemmas \cite{Zar}
as in Fujita \cite{Fuj79} for surfaces (cf.~also \cite[Ch I, \S 3.1 $\sim$ 3.7]{Miy}).

The part (II) below shows, under certain condition $(*)$
(always true for projective $X$ by Theorem \ref{ThB}),
the existence of the {\it Zariski decomposition in the sense of Fujita \cite{Fuj}},
i.e., requiring (II) (i-ii) in Theorem \ref{ThA} (II);
and hence is also the {\it Zariski decomposition in the sense of Cutkosky-Kawamata-Moriwaki},
i.e., requiring (i), (iii) in Theorem \ref{ThA} (II) and (ii)': $N_k$ is effective.

Curious readers may try to extend Theorem \ref{ThA} (II) to deal with
the modified nef classes, by using the characterization of Demailly-Paun for nef
classes.

\begin{theorem}\label{ThA}
Let $X$ be a compact hyperk\"ahler manifold and $D \in \PE(X)$
a pseudo-effective divisor.
Then we have:
\begin{itemize}
\item[(I)]
There is the following {\rm Zariski q-decomposition}
$D = P_D +  N_D$
such that:
\begin{itemize}
\item[(i)]
the divisor $P_D \in \BK(X)$;
\item[(ii)]
the divisor $N_D \ge 0$; either $N_D = 0$, or for $\Supp N_D = \cup N_i$,
the Beauville-Bogomolov quadratic matrix
$(q(N_i, N_j))_{i,j}$ is negative definite; and
\item[(iii)]
$q(P_D, N_i) = 0$ for all $i$.
\end{itemize}
The above Zariski q-decomposition is unique.
Moreover, if $D \ge 0$ then $P_D \ge 0$; if $D$ is a $\BQQ$-divisor then so are $P_D$ and $N_D$.
Finally, for all integers $s \ge 1$, we have the natural isomorphism:
$H^0(X, sP_D) \cong H^0(X, sD)$.
\item[(II)]
Suppose the condition $(*)$ that there is a bimeromorphic map $\sigma_1 : X_1 \dasharrow X$
from a compact hyperk\"ahler manifold $X_1$ such that $\sigma_1^*P_D$ is nef
{\rm (i.e.,~$\sigma_1^*P_D \in \bar{K}(X_1)$; see Theorem \ref{ThB})}. Then there is
a bimeromorphic morphism $\sigma_2 : X_2 \to X$ such that for each $k \in \{1, 2\}$
there is a {\rm Zariski-Fujita decomposition}
$($or {\rm Zariski F-decomposition} for short$)$ $D_k = P_k + N_k$ for $D_k := \sigma_k^*D$,
in the sense of Fujita \cite{Fuj}
{\rm (cf.~\cite[Def.~7-3-2, 7-3-5]{KMM})}.
Thus the following hold.
\begin{itemize}
\item[(i)] the divisor $P_k$ is nef, i.e., $P_k \in \bar{K}(X)$, the nef cone;
\item[(ii)] the divisor $N_k$ is effective; $F \ge \tau^*N_k$ holds whenever
there are a bimeromorphic morphism
$\tau : X' \to X_k$ and divisor $F \ge 0$ with $\tau^*D_k - F$ nef;
\item[(iii)] the natural isomorphism:
$H^0(X_k, sP_k) \cong H^0(X_k, sD_k)$, for all integers $s \ge 1$.
\end{itemize}
The above Zariski F-decomposition is unique. $(iii)$ follows from $(i)$ and $(ii)$. Moreover,
if $D \ge 0$ then $P_k \ge 0$; if $D$ is a $\BQQ$-divisor then so are $P_k$ and $N_k$.
\end{itemize}
\end{theorem}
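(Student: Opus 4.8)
The plan is to build the Zariski q-decomposition in part (I) by a direct, Fujita-style construction using the Beauville--Bogomolov form $q$ as a substitute for the intersection pairing on a surface, then to bootstrap from (I) to (II) by pulling back along the given bimeromorphic modifications. First I would observe that, since $q$ has signature $(3, b_2-3)$ and restricts negative-definitely to the span of any set of irreducible curve classes that can occur in a negative part (this is the key linear-algebra fact one extracts from Boucksom and from the geometry of the positive cone $C(X)$), one can define $N_D$ as follows: among all effective $\BRR$-divisors $N = \sum a_i N_i$ with $a_i \ge 0$, $N_i$ prime, $D - N$ pseudo-effective, and $(q(N_i,N_j))$ negative definite on $\Supp N$, there is a maximal one; set $P_D := D - N_D$. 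The negative-definiteness gives existence and uniqueness of the coefficients solving the linear system $q(P_D, N_i) = 0$, exactly as in the surface case, and shows $P_D$ has no component in common with $N_D$; that $P_D \in \overline{BK}(X)$ follows because $P_D$ is pseudo-effective, lies on the correct side of every wall $N_i^{\perp}$, and $\overline{BK}(X)$ is cut out inside $\bar C(X)$ by such conditions (I would cite the structure of $\overline{BK}(X)$ from \cite[pp.~182--184]{GHJ}).

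Next, for the cohomological statement $H^0(X, sP_D) \cong H^0(X, sD)$: any effective divisor $E$ linearly equivalent to $sD$ decomposes as $E = E' + E''$ where $E''$ is supported on $\Supp N_D$; pairing with each $N_i$ and using negative-definiteness together with $q(P_D, N_i) = 0$, $D - N_D \in \PE(X)$, forces the coefficients of $E''$ to be at least $s a_i$, so $E - s N_D \ge 0$ is the corresponding section of $sP_D$. The rationality and effectivity addenda ($D \ge 0 \Rightarrow P_D \ge 0$; $D$ a $\BQQ$-divisor $\Rightarrow$ so are $P_D, N_D$) are immediate from the construction: the linear system defining the $a_i$ has rational coefficients, and $P_D \ge 0$ because it is pseudo-effective with a genuine section in each sufficiently divisible multiple via the isomorphism just proved.

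For part (II), assume $(*)$: $\sigma_1^* P_D$ is nef on $X_1$. I would set $P_1 := \sigma_1^* P_D$ and $N_1 := \sigma_1^* N_D + (\text{$\sigma_1$-exceptional correction})$; since $\sigma_1$ is a bimeromorphic map between HyperK\"ahler manifolds it is an isomorphism in codimension one, so in fact $\sigma_1^* D = \sigma_1^* P_D + \sigma_1^* N_D$ with no exceptional term and $N_1 = \sigma_1^* N_D \ge 0$. Then $P_1$ nef and $D_1 - P_1 = N_1 \ge 0$ give (i) and (ii) once we verify the minimality clause: if $\tau: X' \to X_1$ and $F \ge 0$ with $\tau^* D_1 - F$ nef, then $\tau^* D_1 - F$ is in particular pseudo-effective and nef, hence equals its own positive part, so by uniqueness in (I) applied on $X'$ and push-forward we get $F \ge \tau^* N_1$. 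To obtain the morphism $\sigma_2: X_2 \to X$ (rather than merely a meromorphic map) I would take $X_2$ to be a suitable resolution of the graph of $\sigma_1^{-1}$, or invoke the fact that a bimeromorphic map from a HyperK\"ahler manifold whose relevant pullback is nef can be replaced by a morphism after further blow-up, defining $P_2, N_2$ by pullback from $X_1$ and checking the same properties. Finally (iii) follows from (i)+(ii) by the same argument as in (I), and uniqueness of the F-decomposition follows from uniqueness of the q-decomposition together with the fact that a nef divisor is its own positive part.

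The main obstacle I anticipate is the linear-algebra heart of (I): showing that the matrix $(q(N_i,N_j))$ is negative definite on the support of any candidate negative part, and that a maximal such $N_D$ exists (the selection of the correct set of prime components and the proof that the process terminates). On a surface this is Zariski's lemma about effective divisors with zero self-intersection; here one must replace the Hodge index theorem by the signature $(3, b_2-3)$ of $q$ and the geometry of $C(X)$ and $\overline{BK}(X)$, using that fibral or rigid components of a pseudo-effective divisor span a negative-definite subspace — this is where the input from Boucksom \cite{Bou} and the Beauville--Fujiki relation $q(L)^n = c L^{2n}$ will do the real work, and care is needed because $b_2(X)$ can be large so there is no a priori bound on the number of components, unlike the surface case where one still argues by a termination/maximality argument on the finite-dimensional cone.
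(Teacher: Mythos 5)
Your overall strategy (a Fujita-style construction for (I) with $q$ replacing the surface intersection form, then pullback for (II)) is the paper's, but two parts of the proposal contain genuine gaps. First, in (I) your definition of $N_D$ as a \emph{maximal} effective divisor with negative-definite support such that $D-N$ stays pseudo-effective is the wrong extremal property: if $D$ is big (so the true $N_D$ is $0$), a small multiple of any prime divisor $E$ with $q(E)<0$ qualifies as such an $N$, so a maximal one would be nonzero and the orthogonality $q(P_D,N_i)=0$ would fail; moreover the existence of a maximum is not established, and the definition clashes with your next sentence, where the coefficients are instead to be found by solving the linear system $q(P_D,N_i)=0$. The correct characterization is the \emph{minimal} $N\ge 0$ with $D-N\in\overline{BK}(X)$, or, as in the paper, the iterative construction: take all primes $E_i$ with $q(D,E_i)<0$, prove that this set is negative definite, subtract the (nonnegative, by Zariski's lemma) solution of the orthogonality system, and repeat until termination. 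The two inputs that carry this argument --- that $q(E,E')\ge 0$ for distinct prime divisors (proved via the integral formula for $q$) and that a pseudo-effective class pairing nonnegatively with every prime divisor lies in $\overline{BK}(X)$ --- are exactly what you flag as your ``main obstacle'' and defer to Boucksom without an argument; these are Lemma \ref{key} (1)--(3) in the paper, and both the negative definiteness of the chosen support and the conclusion $P_D\in\overline{BK}(X)$ rest on them, so they cannot be waved at.

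In (II) two steps are missing. On the blowup $X_2$ one does not have $\sigma_2^*D=\pi^*D_1$: the two differ by a $\pi$-exceptional divisor $E$, so $N_2=\sigma_2^*N_D+E$, and the effectivity of $E$ (hence of $N_2$) must be proved --- in the paper via $\sigma_{2*}E=0$, the $\sigma_2$-nefness of $-E$, and the negativity lemma of Koll\'ar--Mori; your ``define $P_2,N_2$ by pullback and check the same properties'' skips exactly this point. For the minimality clause (II)(ii) you propose to apply (I) ``on $X'$'', but $X'$ is a blowup of $X_k$ and is not HyperK\"ahler, so (I) is not available there; the paper instead pushes the nef divisor $\tau^*D_k-F$ down to $X$, checks that its image lies in $\overline{BK}(X)$ (Lemma \ref{key}(1)(2)), invokes the minimality characterization of $N_D$ (Remark \ref{rThA}) to get $\sigma_{2*}F\ge N_D$, and then transfers the inequality back up to $X_2$ with the negativity lemma. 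As written, your verification of (ii), and hence of uniqueness, does not go through.
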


The condition $(*)$ in Theorem \ref{ThA} is always satisfied by
projective $X$, by Theorem \ref{ThB}.

The result below is parallel to the surface case.

\begin{corollary}\label{Ddim}
With the notation in Theorem {\rm \ref{ThA} (I)}, the Iitaka D-dimension
$\kappa(X, D)$ equals $\kappa(X, P_D)$; we have $\kappa(X, D) = \dim X$ $($maximal case$)$
if and only if $q(P_D) > 0$.
\end{corollary}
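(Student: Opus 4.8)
The plan is to turn the statement into a volume computation governed by the Beauville--Fujiki relation. First I would use the last assertion of Theorem~\ref{ThA}~(I): the natural isomorphisms $H^0(X, sP_D) \cong H^0(X, sD)$ for all integers $s \ge 1$ give $h^0(X, \lfloor mP_D\rfloor) = h^0(X, \lfloor mD\rfloor)$ for all $m$, hence $\kappa(X, D) = \kappa(X, P_D)$ and $\mathrm{vol}(X, D) = \mathrm{vol}(X, P_D)$. Since $\kappa(X, D) = \dim X$ is by definition equivalent to $\mathrm{vol}(X, D) > 0$, it suffices to show $\mathrm{vol}(X, P_D) > 0 \iff q(P_D) > 0$.

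The core step is to evaluate $\mathrm{vol}(X, P_D)$. By Theorem~\ref{ThA}~(I)(i) we have $P_D \in \overline{BK}(X)$, the closure of the birational K\"ahler cone, so $P_D = \lim_j f_j^*\kappa_j$ with $f_j \colon X \ratmap X_j$ bimeromorphic to a HyperK\"ahler manifold $X_j$ and $\kappa_j$ K\"ahler on $X_j$. For each such class $f_j^*\kappa_j$: since $K_X = 0$, the map $f_j$ is an isomorphism in codimension one, so --- using in turn the birational invariance of the volume, the equality of volume and top self-intersection for a K\"ahler (hence nef) class, the Beauville--Fujiki relation on $X_j$, and the birational invariance of $q$ --- one gets $\mathrm{vol}(X, f_j^*\kappa_j) = \mathrm{vol}(X_j, \kappa_j) = \int_{X_j}\kappa_j^{2n} = q_{X_j}(\kappa_j)^n/c = q(f_j^*\kappa_j)^n/c$. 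By continuity of the volume function on $H^{1,1}(X, \BRR)$ (\cite{Bou}) this passes to the limit: $\mathrm{vol}(X, P_D) = q(P_D)^n/c$. Finally $P_D \in \overline{BK}(X) \subseteq \bar{C}(X)$ forces $q(P_D) \ge 0$, so with $c > 0$ and $n \ge 1$ we obtain $\mathrm{vol}(X, P_D) > 0 \iff q(P_D)^n > 0 \iff q(P_D) > 0$, which completes the argument.

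The main obstacle is the identity $\mathrm{vol}(X, P_D) = q(P_D)^n/c$. If the hypothesis $(*)$ of Theorem~\ref{ThA}~(II) happened to hold one could pull $P_D$ back to a model on which it is genuinely nef and apply the nef-and-big criterion directly; but the corollary is stated only under the hypotheses of part (I), where $P_D$ lies a priori just in the \emph{closure} of the birational K\"ahler cone, so one cannot simply pass to such a model. One must instead go through Boucksom's analytic machinery --- continuity of the volume, or equivalently the fact that a class in $\overline{BK}(X)$ is modified nef and so carries no divisorial ``defect'' --- to guarantee that the top intersection number $P_D^{2n}$ really computes the volume; once this is granted, the rest is the Beauville--Fujiki relation together with the sign of $q$ on $\bar{C}(X)$.
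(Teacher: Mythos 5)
Your reduction to the equality $\kappa(X,D)=\kappa(X,P_D)$ and the sign of $q$ on $\bar C(X)$ is fine, but the load-bearing step, $\mathrm{vol}(X,P_D)=q(P_D)^n/c$ for a class that is merely in $\overline{BK}(X)$, is not justified as written, and in exactly the direction where you need it most. For the implication $q(P_D)>0\Rightarrow\kappa(X,D)=\dim X$ you pass to the limit $\mathrm{vol}(X,P_D)=\lim_j\mathrm{vol}(X,f_j^*\kappa_j)$ along \emph{transcendental} approximants $f_j^*\kappa_j$, invoking ``continuity of the volume on $H^{1,1}(X,\BRR)$'' from \cite{Bou}. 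But \cite{Bou} gives continuity of the volume on the \emph{big} cone; continuity (equivalently the needed upper semicontinuity) at a boundary, not-yet-known-to-be-big class, approached by transcendental classes, is precisely the delicate transcendental statement (of Morse-inequality type) that is not available there, and you cannot assume $P_D$ is big at this point without being circular --- bigness of $P_D$ is what this direction is supposed to produce. A second, smaller gap: the ``birational invariance'' $\mathrm{vol}(X,f_j^*\kappa_j)=\mathrm{vol}(X_j,\kappa_j)$ for transcendental K\"ahler classes across an isomorphism in codimension one is asserted, not proved; it can be repaired (common resolution, the negativity lemma as in the paper's proof of Theorem \ref{ThA}(II) to see the discrepancy class is an effective exceptional divisor, and invariance of the analytic volume under adding such a class), but it is not a one-line citation. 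There is also the minor point that the section-counting volume of the $\BRR$-divisor $D$ and the analytic volume of its class have to be identified, which again uses bigness/projectivity.

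The paper avoids all of this with an elementary argument you should compare against: if $q(P_D)>0$, then Lemma \ref{key}(5) (i.e.\ \cite[Prop.~26.13]{GHJ}) gives directly that $X$ is projective and $P_D$ is big, hence $\kappa(X,P_D)=\dim X$; conversely, if $\kappa(X,D)=\dim X$ then $X$ is Moishezon and K\"ahler, hence projective, $P_D$ is big, so $P_D=A+E$ with $A$ ample and $E\ge 0$, and then $q(P_D)\ge q(P_D,A)\ge q(A)>0$ by the nonnegativity of the $q$-pairings in Lemma \ref{key}(1)(2). So the cleanest fix for your first direction is to replace the boundary-continuity argument by exactly this bigness criterion; your volume identity $\mathrm{vol}=q^n/c$ is then only needed (and is provable, modulo the invariance step above) on the big cone, where it is no longer required for the corollary.
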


\begin{remark}\label{rThA}
\begin{itemize}
\item[(1)]
The Zariski decomposition $D_k = P_k + N_k$ in Theorem \ref{ThA} (II) is also in the sense of
Cutkosky-Kawamata-Moriwaki {\rm (cf.~\cite[Def.~7-3-2, 7-3-5]{KMM})}.

\item[(2)]
By the proof, the Zariski-Fujita decomposition in Theorem \ref{ThA} (II) for
$D_1$ on $X_1$, coincides with the Zariski q-decomposition in Theorem \ref{ThA} (I) for $D_1$.

\item[(3)]
The $N_D$ in Theorem \ref{ThA} (I)
is the smallest effective divisor such that $D - N_D \in \BK(X)$:
if $N' \ge 0$ such that $P' := D - N' \in \BK(X)$,
then $N' \ge N_D$. Indeed, $q(N' - N_D, N_i) = q(P_D - P', N_i) = -q(P', N_i) \le 0$
(cf.~Lemma \ref{key} (1))
for every $N_i \le N_D$ and hence $N' - N_D \ge 0$ by Zariski's lemma
as in the proof of \cite[Ch I, 3.2]{Miy}.

\item[(4)]
By the above reasoning and Lemma \ref{key} (2),
the Zariski q-decomposition in Theorem \ref{ThA}(I) (for projective $X$)
coincides with both the
$\sigma$- and $\nu$-decomposition
in Nakayama \cite[Ch III, Def. 1.16 and 3.2]{ZDA}.
\end{itemize}
\end{remark}

\begin{setup}
{\bf Proof of Theorem \ref{ThA}}
\end{setup}
Part (I) is proved in \cite[\S 4]{Bou}, but the intersection-form based
arguments in Zariski \cite{Zar} and Fujita \cite{Fuj79}
(cf.~also Miyanishi
\cite[Ch I, 3.1 $\sim$ 3.7]{Miy})
work well for (I) with almost no change, though we use $q(*)$ and Lemma \ref{key} instead of the intersection form
for surfaces. Nef divisors on a surface correspond to elements in the birational K\"ahler cone $\BK(X)$ (cf.~Lemma \ref{key} (1)).
The only non-intersection based usage of the Riemann-Roch theorem in \cite[Lemma 3.5]{Miy}
may be replaced by Lemma \ref{key} (5) (3).

The sketch of a constructive proof for (I): Let $E_i$ ($1 \le i \le t_1$) be all prime divisors such that
$q(D, E_i) < 0$. Then $(q(E_i, E_j))_{i,j}$ is a negative definite
matrix [ibid. proof of 3.6].
Let $F_1$ be a (non-negative, by Zariski's lemma) combination of $E_i$ such that
$D_1 := D - F_1$ satisfies $q(D_1, E_i) = 0$ ($1 \le i \le t_1$).
Then $D_1$ is pseudo-effective (and effective when $D$ is effective); cf.~ Lemma \ref{key} (3)
and [ibid. 3.3].
Let $E_j$ ($t_1+1 \le j \le t_1+t_2$) be all prime divisors satisfying $q(D_1, E_j) < 0$.
Then $(q(E_i, E_j))_{1 \le i, j \le t_1+t_2}$ is negative definite;
cf.~[ibid. 3.5] and Lemma \ref{key} (7).
Let $F_2$ be a (non-negative) combination of $E_k$ ($1 \le k \le t_1+t_2$) such that
$D_2 := D - F_2$ satisfies $q(D_2, E_k) = 0$.
Then $D_2$ is pseudo-effective (and effective when $D$ is effective);
cf.~Lemma \ref{key} (3) and [ibid. 3.3].
Since $X$ has finite Picard number and by Lemma \ref{key} (3),
this process will terminate at step $r$, and $P_D := D_r$ and $N_P := \sum_{i=1}^r F_i$
satisfy Theorem \ref{ThA} (I); cf.~[ibid. 3.7] and Lemma \ref{key} (1).

Next we prove Theorem \ref{ThA} (II). Let $D = P_D + N_D$ be as in (I).
Set
$$D_1 := \sigma_1^*D, \,\,\,\, P_1 := \sigma_1^*P_D, \,\,\,\, N_1 := \sigma_1^*N_D .$$
Then $D_1 = P_1 + N_1$ is the Zariski q-decomposition for $D_1$,
by the uniqueness in (I) and since $\sigma_1^*$
is compatible with $q(*)$ (cf.~Lemma \ref{key} (6)).
Let $\pi : X_2 \to X_1$ be a blowup such that the
composition $\sigma_2 = \sigma_1 \circ \pi : X_2 \to X$ is holomorphic.
Set
$$D_2 := \sigma_2^*D, \,\,\,\, P_2 := \pi^*P_1, \,\,\,\, N_2 := D_2 - P_2.$$
Note that $P_2$ is nef and $N_2 = \sigma_2^*N_D + E$ for some
$\sigma_2$- (and hence $\pi$-) exceptional divisor $E$, since $\sigma_1$
is isomorphic in codimension one (cf.~Lemma \ref{key} (6)).

We claim that $E \ge 0$ and hence $N_2 \ge 0$. Indeed, $E = \sigma_2^*D - \pi^*P_1 - \sigma_2^*N_D$
and $-E$ is $\sigma_2$-nef. Further, we have
$\sigma_{2*} E = 0$, since $E$ is $\sigma_2$-exceptional.
Hence $E \ge 0$ by the negativity lemma \cite[Lemma 3.39]{KM}.

Now we show that $D_k = P_k + N_k$ ($k = 1, 2$) is the Zariski F-decomposition as in (II).
The condition (II-i) is of course true. For a direct proof of the condition (II-iii),
by the projection formula (for the first and last equalities below),
since $\sigma_1$ is isomorphic in codimenion one, and applying (I) to $D_1$,
for every integer $s \ge 1$, we have:
$$H^0(X_2, sD_2) = H^0(X, sD) = H^0(X_1, sD_1) = H^0(X_1, sP_1) = H^0(X_2, sP_2).$$
To show (II-ii), we consider $D_2$ only (because $D_1$ is similar and easier),
and replacing $\pi$ by a further blowup, we have only to show the assertion $(**)$:
if $P' := D_2 - F$ is nef for an effective $\BRR$-divisor $F$ then
$F \ge N_2$.
Note that
$\sigma_{2*}P' \in \BK(X)$
(cf.~Lemma \ref{key} (1)(2))). So $\sigma_{2*}F \ge N_D$
by applying Remark \ref{rThA} to $\sigma_{2*}(P') = \sigma_{2*}(D_2 - F) = D - \sigma_{2*}F$.
Now $-(F - N_2) = P' - \pi^*P_1$ is $\pi$-nef, and
$$\sigma_{1*}\pi_*(F - N_2) = \sigma_{2*}(F - \sigma_2^*N_D - E) = \sigma_{2*}F - N_D \ge 0$$ so
$\pi_*(F - N_2) \ge 0$. Hence $F - N_2 \ge 0$ by [ibid.]. This proves $(**)$ and also (II-ii).

The uniqueness of the Zariski F-decomposition is due to the condition (II-ii).
The rest of (II) is from the construction and (I). This completes the proof of Theorem \ref{ThA}.

\begin{setup}
{\bf Proof of Corollary \ref{Ddim}}
\end{setup}

By Theorem \ref{ThA} (I), $D = P_D + N_D$ and $\kappa(X, D) = \kappa(X, P_D)$.
If $q(P_D) > 0$ then $P_D$ is big (cf.~Lemma \ref{key} (5)) and hence $\kappa(X, P_D) = \dim X$.
Conversely, suppose that $\kappa(X, D) = \dim X$. Then
$X$ is Moishezon and K\"ahler and hence projective, and
$P_D$ (and also $D$) are big.
So $P_D = A + E$ for an ample $\BRR$-divisor $A$ and an effective $\BRR$-divisor $E$.
By Lemma \ref{key} (1), $q(P_D) \ge q(P_D, A) \ge q(A, A) > 0$. This proves Corollary \ref{Ddim}.

\begin{setup}
{\bf Proof of Theorem \ref{ThB}}
\end{setup}

Replacing $P$ by a small multiple, we may assume that $(X, P)$ is
terminal (cf.~\cite[Cor.~2.35]{KM}).
By the minimal model program (MMP),
\cite[Cor.~1.4.1]{BCHM} and the termination
of $P$-flops on hyperk\"ahler projective manifolds (to be proved in Theorem \ref{main}),
there is a surjective-in-codimension-one birational map
$\sigma: X \dasharrow X'$ such that $(X', P')$ (with $P' := \sigma_{*}P$) is
$\BQQ$-factorial and terminal, and
$K_{X'} + P'$ is nef. Further, $\sigma = \sigma_r \circ \cdots \circ \sigma_1$,
where each $\sigma_i : X_i \dasharrow X_{i+1}$ is either divisorial or a flip.
Let $P_i \subset X_i$ be the image of $P$.
If $\sigma_1 : X = X_1 \to X_2$ is a $(K_{X_1} + P_1)$-negative
divisorial contraction with $E_1$ the exceptional (necessarily irreducible)
divisor, then $\ell_1 . (K_{X_1} + P_1) < 0$ for a general curve $\ell_1$ in a general fibre of
the restriction map $\sigma_1 | E_1$;
on the other hand, one has $\ell_1 . P_1 \ge 0$
since $P_1 \in \BK(X_1) \cap \N^1(X_1) = \Mov(X_1)$,
the closed movable cone;
see \cite[Ch III, Prop.~1.14]{ZDA} and Remark \ref{rThA},
or \cite[Prop.~4.4, Lemma 4.9]{Bou} and Lemma \ref{key}.
This is a contradiction, noting that $K_{X_1} = 0$.
Thus for $i = 1$, the $\sigma_i$ is a $(K_{X_i} + P_i)$-flip and hence a $P_i$-flop
as defined in \ref{definition_of_flop},
so $X_{i+1}$ is a projective hyperk\"ahler manifold by
\cite[Cor.~1]{Nam}, and $P_{i+1} \in \BK(X_{i+1})$
by Lemma \ref{key} (6); this is true for all $i$, by the same reasoning.
Letting $\tau = \sigma^{-1}$,
this proves Theorem \ref{ThB}.

\begin{setup}
{\bf Proof of Theorem \ref{ThC}}
\end{setup}

$(1) \, \Rightarrow \, (2) \, \Rightarrow (3)$ is always true.

For $(3) \, \Rightarrow \, (4)$,
we may assume that
$\Phi_{|L|} : X \dasharrow B'$ is a dominant rational map with
$\dim B' = \kappa(X, L)$.
By Lemma \ref{AC}, $\kappa(X, L) = \dim B' + \kappa(F)$
for a general fibre $F$ of $\Phi_{|L|}$, and hence $\kappa(F) = 0$ and (4) is true
with $g := \Phi_{|L|}$.

For $(4) \Rightarrow (3)$, take a blowup $\pi : Y \to X$ such that
the composition
$f = g \circ \pi : Y \to B'$ is holomorphic. Let
$H$ be an ample $\BQQ$-divisor on $B'$ and $L := \pi_* f^*H \in \BK(X)$.
We use the argument in \cite[Prop.~3.1]{AC} to show the claim
that $L$ is not big.
Indeed, if $L$ is big (so $X$ is Moishezon and K\"ahler
and hence projective) then so is $\pi^*L | F$ for a general fibre $F \subset Y$
of $f$. On the other hand, $\pi^*L = f^*H + E$ with $E$ effective and
$\pi$-exceptional. Since $X$ is smooth (and hence terminal),
$K_Y = \pi^*K_X + E_{\pi} = E_{\pi}$ with $\Supp E_{\pi}$ containing
every $\pi$-exceptional irreducible divisor, so $E_{\pi} \ge \varepsilon E$,
and hence $K_F = E_{\pi} | F \ge \varepsilon E | F = \varepsilon \pi^*L | F$
for some small $\varepsilon > 0$. Thus
$\kappa(F) \ge \kappa(F, \, \varepsilon \pi^*L|F) = \dim F$ by the
bigness of $\pi^*L | F$. This contradicts the assumption on $\kappa(F)$
in (4).
This proves the claim and hence (3).

For $(3) \, \Rightarrow \, (1)$, replacing $L$ by its multiple,
we may assume that $g = \Phi_{|L|} : X \dasharrow B$ is the Iitaka fibration of $L$
with $\dim B = \kappa(X, L)$, as defined in Theorem 10.3 of Iitaka's GTM book
with the birational uniqueness of this fibration given in Theorem 10.6 of the same book.
Write $|L| = |M| + Fix$ with $Fix$ the fixed part and $\kappa(X, M) = \kappa(X, L)$.
Then $M \in \BK(X)$ and $q(M) = 0$ (cf.~Corollary \ref{Ddim}).

{\it We remark that
for the proof of Corollary \ref{nefL}, if $L$ is non-big, and nef or $L \in \BK(X)$,
then $0 = q(L) \ge q(L, M) \ge q(M) = 0$; so $M$ (and hence $Fix$) are parallel
to $L$; thus $Fix = 0$, after replacing $L$ by its multiple; see
Lemma \ref{key} (7)(8).}

Replacing $L$ by $M$, we may assume that
$Fix = 0$.
Let $\pi: Y \to X$ be the resolution of $\Bs|L|$
so that the composition $f = g \circ \pi : Y \to B$ is holomorphic
and $\pi^*L = f^*H + E$ with $H \subset B$ ample and $E$ effective
and $\pi$-exceptional.

By Theorem \ref{ThB},
there is a birational map $\sigma : X' \dasharrow X$ from
a hyperk\"ahler projective manifold $X'$ such that
$\sigma^*L$ is nef. Replacing $Y$ by a further blowup,
we may assume that there is a birational morphism $\pi' : Y \to X'$
such that $\sigma \circ \pi' = \pi$.
Since $\sigma$ is isomorphic in codimension one,
we have $L = \pi_* f^* H = \sigma_* \pi'_* f^*H$, so
the nef divisor $L' := \sigma^*L$ equals $\pi'_* f^*H$.
Replacing $(X, L, \pi, \Phi_{|L|})$ by $(X', L', \pi', \Phi_{|L'|})$,
we may assume that $L$ is already nef.
By Lemma \ref{AC}, $\kappa(F) = 0$ (since $\kappa(X, L) = \dim B$), and
the nef dimension $n(L) < \dim X$.
So $L$ is semi-ample and the Iitaka fibration $g = \Phi_{|L|}$ is a
(holomorphic) Lagrangian fibration (cf.~\cite[Th.~1.5, Remark 1.6]{Mat}).
This proves (1). We have completed the proof of Theorem \ref{ThC}.

\begin{setup}
{\bf Proof of Theorem \ref{ThD}}
\end{setup}

We prove only $(3) \, \Rightarrow \, (1)$,
and may assume that $L$ is nef and use the notation in Theorem \ref{ThC},
because the other implications can be done as in the proof of Theorem \ref{ThC}, though
the Moishezon-ness of $B'$ (cf.
\cite[Th.~2.3]{COP}) is used in applying
\cite[Th.~2.3, the proof of Prop.~3.1]{AC}.

Set $\dim X = 2n$.
By a result of Verbitsky
(cf.~\cite[Prop.~24.1]{GHJ}) and since $q(L) = 0$ (cf.~Cor.~\ref{Ddim}),
we have $L^n \ne 0$ and $L^{n+1} = 0$. So the numerical $D$-dimension
$\nu(L)$ equals $n$. It is known that $\nu(L) \ge \kappa(X, L) \ge n$.
Hence $\nu(L) = \kappa(L) = n$ and $L$ is abundant. Thus $L$ is semi-ample
by a result of Kawamata, Nakayama and Fujino (cf.~\cite[Th.~4.8]{Fujino}).
So, in $\Pic(X) \otimes_{\BZZ} \BQQ$, the $L$
is the pullback of an ample divisor on a projective variety $B$ by
a (holomorphic) Lagrangian fibration,
by the proof of \cite[Addendum, Th.~1]{Mat99}.
This proves (1).
We have completed the proof of Theorem \ref{ThD}.

\par \vskip 1pc

The proofs of Theorems \ref{ThC} and \ref{ThD} actually also imply the
following. We leave the details for the reader to work out.

\begin{corollary}\label{nefL}
Assume the hypothesis in Theorem $\ref{ThC}$ or $\ref{ThD}$. Then we have:
\begin{itemize}
\item[(1)]
In the situation of Theorem $\ref{ThC} (2)$ or $\ref{ThD} (2)$,
in $\Pic(X') \otimes_{\BZZ} \BQQ$, the $L'$
is the pullback of an ample divisor by
a $($holomorphic$)$ Lagrangian fibration on $X'$.
\item[(2)]
In the situation of Theorem $\ref{ThC} (3)$ or $\ref{ThD} (3)$,
we have $\kappa(X, L) = \dim X/2$.
\end{itemize}
\end{corollary}

We remark a non-vanishing result below.
As pointed out by the referee, the condition is a bit artificial.

\begin{remark}\label{Extr}
Suppose that $0 \ne L$ is a $\BQQ$-divisor $($resp. $\BRR$-divisor$)$
on a compact hyperk\"ahler manifold $X$
such that $L \in \BK(X)$, and
$\BRR_{\ge 0} [L]$ is {\rm not} an extremal ray in the pseudo-effective divisor
$($closed$)$ cone $\PE(X)$ of $X$.
{\it Then we claim that $\Pic(X) \, \ni \, eL \, \sim \, M$
for some effective integral divisor $M$ and
some $e \in {\BQQ}_{> 0}$
$($resp. $e \in {\BRR}_{> 0})$.}

Indeed, if $q(L) > 0$, then $L$ is big (cf.~Lemma \ref{key} (5)) and our claim
is true. We may assume that $q(L) = 0$.
By the assumption, $L \equiv D + G$ (numerical equivalence)
for pseudo effective $\BRR$-divisors $D$ and $G$, which are
not parallel to $L$. By Theorem \ref{ThA} (I) and Lemma \ref{key} (1)
$$\begin{aligned}
D = P_D + N_D, \,\, G = P_G + N_G, \,\,
L \equiv (P_D + P_G) + (N_D + N_G) ; \\
0 = q(L) \ge q(L, P_D + P_G) = q(L, P_D) + q(L, P_G) \ge 0 + 0 ;
\end{aligned}$$
so all become equalities. Now Lemma \ref{key} (7) implies that $P_D \equiv bL$ and $P_G = gL$
for some $b, g \ge 0$.
Hence $(1-b-g)L \equiv N_D + N_G \ge 0$ and $1 \ge b+g$.
If $1 = b+g$, then $N_D = 0 = N_G$, and $D$ and $G$ are parallel to $L$, a contradiction.
Thus $b+g < 1$, and $L \equiv \sum a_i N_i$ for some $a_i \ge 0$
and irreducible components $N_i$ of $N_D + N_G$.
Since the combination $L$ of $N_i$ satisfies $q(L) = 0$,
the matrix $(q(N_i, N_j))_{i,j}$ is not negative definite.
Hence some nontrivial integral combination $N' := \sum b_i N_i - \sum c_j N_j$
(with $N_i \ne N_j$; $b_i, c_j \in \BZZ_{\ge 0}$) satisfies $0 \le q(N') \le
q(\sum b_i N_i) + q(\sum c_j N_j)$ (cf.~Lemma \ref{key} (2)).
Thus we may assume that $N' > 0$ and $q(N') \ge 0$.
Since
$0 = q(L) \ge q(L, N_D+N_G) \ge 0$, all become equalities.
In particular, $q(L, N') = 0$. Therefore, $L$ is parallel to $N'$
and our claim follows (cf.~Lemma \ref{key} (7)(8)).

\end{remark}

\newtheorem{prop}{{\sc Proposition}}[section]
\newtheorem{thmapp}{{\sc Theorem}}[section]
\newtheorem{cor}[thm]{{\sc Corollary}}
\newtheorem{defn}{{\sc Definition}}[section]
\newtheorem{prob}[thm]{{\sc Problem}}
\theoremstyle{definition}
\newtheorem{say}[thm]{}

\renewcommand{\labelenumi}{{\rm (\arabic{enumi})}}
\renewcommand{\labelenumii}{{\rm (\alph{enumii})}}
\newcommand{\qqed}{\hspace*{\fill} $\Box$}

%
%
\section{Terminations of flops between projective hyperk\"ahler manifolds}



In this section, we prove that any sequence of D-flops between projective
hyperk\"ahler manifolds terminates after finitely many steps.

\begin{theorem}\label{main}
 Let $X$ be a projective hyperk\"ahler manifold
 and $D$ an effective $\mathbb{R}$-divisor on $X$.
 Assume that $(X,D)$ has at worst
 log canonical singularities.
 Then there exist no
 sequences of $D$-flops which have infinite length.
\end{theorem}

 We start with the definition of $D$-flops.

\begin{definition}\label{definition_of_flop}
 Let $X$ and $X'$ be $\mathbb{Q}$-Gorenstein
 normal varieties.
 A birational map $\phi : X \dasharrow X'$
 is said to be a $D$-{\it flop} if
 there exist a normal variety $Z$,
 projective
 birational morphisms $f : X \to Z$ and $f' : X' \to Z$
 and an effective $\mathbb{R}$-Cartier divisor $D$ on $X$
 which satisfy the following properties:
\begin{enumerate}
 \item The morphisms $f$ and $f'$ are isomorphic in codimension one.
 \item The maps satisfy the following commutative diagram:
$$
 \xymatrix{
 X \ar[dr]_{f} \ar@{-->}[rr]^{\phi}&  & X' \ar[dl]^{f'} \\
 & Z &
 }
$$
 \item The canonical divisors $K_X$ and $K_X'$ are relatively
       numerically trivial.
 \item The pair $(X,D)$ has only
       log canonical singularities
       and $-D$ is $f$-ample.
 \item The proper transform $D'$ of $D$ is $f'$-ample.
 \item The relative Picard numbers $\rho (X/Z)$ and $\rho (X'/Z)$
       are one.
\end{enumerate}
 Let $\phi_i$ be birational maps which satisfies
 the following sequence:
$$
 \xymatrix{
 X := X_1 \ar[dr]_{f_1} \ar@{-->}[rr]^{\phi_1}& &
      X_2 \ar[dr]_{f_2} \ar@{-->}[rr]^{\phi_2} \ar[dl]^{f^+_1} & &
      X_3 \ar[dr]_{f_3} \ar@{-->}[rr]^{\phi_3} \ar[dl]^{f^+_2} & &
      X_4 \ar[dl]^{f^+_3} \dasharrow \cdots \\
     &Z_{1}& &Z_{2}& &Z_{3}&
 }
$$
  This sequence is said to be {\it a sequence of $D$-flops} if
  there exists an effective $\mathbb{R}$-Cartier divisor $D$ on $X$ such that
  $\phi_i$ is $D^{(i)}$-flop, where $D^{(1)}$ is $D$ and
  $D^{(i)}$ is the proper
  transform of $D^{(i-1)}$ by $\phi_{i-1}$.
\end{definition}

Next we define log discrepancies and
  minimal log discrepancies.

\begin{definition}
  Let $X$ be a normal variety
  and $D$ a Weil divisor on $X$ such that
  $K_{X} + D$ is  $\mathbb{R}$-Cartier.
  For a birational morphism $\mu : X' \rightarrow X$
  from a normal variety $X'$ and
  a prime Weil divisor $E'$ on $X'$, we define
  the {\it log discrepancy} $a(E';X,D)$ by
$$
 a(E' ;X,D) := (\mbox{The coefficient of $E'$ in
 $K_{X'} - \mu^{*}(K_{X}+D)$}) + 1.
$$
  For a proper closed subset $W$ of $X$, we define the {\it minimal
  log discrepancy of} $(X,D)$ by
$$
 \mathrm{mld}(W;X,D):=
 \inf_{\mu (E') \subseteq W}a(E';X,D).
$$
\end{definition}

Now we prove Theorem \ref{main}.
We use the same notation as in Theorem \ref{main}.
According to \cite[Theorem]{shokurov},
to prove Theorem \ref{main},
it is enough to show the following two statements.
\begin{enumerate}
 \item For each $i$, the function $p_{i}$ on $X_{i}$ which is defined by
$$
 x \in X_{i} \mapsto \mathrm{mld}(x;X_{i},D^{(i)})
$$
       is lower semi continuous.
 \item  Let $\mathcal{S}$ be the set of
  the minimal log discrepancies defined by
$$
  \mathcal{S} := \bigcup_{{i}}
 {\rm{mld}}(W_{i};X_i, D^{(i)}),
$$
 where $W_i$ is the exceptional locus of $f_i$.
The set $\mathcal{S}$ satisfies the
       ascending chain condition.
\end{enumerate}

 First we prove (1).
 If $X$ is  smooth projective and
 carries a holomorphic symplectic form,
 then $Z_{1}$ is a symplectic variety by
 \cite[Def.~1.1]{beauville_original}.
 Further, $X_i$ has only $\Q$-factorial terminal singularities,
 by the construction of our $D^{(i)}$-flops.
 Thus
 every $X_i$ is smooth by \cite[Cor.~1]{Nam}.
 Then each $p_{i}$ is lower semi continuous
 by
 \cite[Th.~4.4]{yasuda}.

Next we prove (2).
Since all $ X_{i} $ are smooth, $ \mathrm{mld}(W_{i},X_{i},D^{(i)}) \le \dim X_{i}$.
On the other hand,  all pairs $ (X_{i},D^{(i)}) $ still have
only log canonical singularities since each $f_i$ is the contraction
of a $(K_{X_i} + D^{(i)})$-negative extremal ray.
Hence $0 \le  \mathrm{mld}(W_{i};X_{i},D^{(i)}) $.
Moreover, since
all $ X_{i} $ are smooth and the sets of all coefficients of $ D^{(i)} $
are stable,
the set of log discrepancies of $ (X_{i},D^{(i)}) $ is discrete by
\cite[Th.~5.2]{kawakita}.
Therefore $\mathcal{S}$ is a finite set and we are done.
This proves Theorem \ref{main}.

\begin{remark}
 If   $ X $ is a projective symplectic variety which has only quotient
 singularities, it has been announced that there exist no sequences of $ D $-flops which have
 infinite length in a very recent preprint \cite[Cor.~1.4]{1305.1410}.
\end{remark}

\end{document}